\theoremstyle{plain}
\newtheorem{Thm}{Theorem}
\newtheorem{Def}[Thm]{Definition}
\begin{document}

\title[constants and heat flow]
{constants and heat flow on graphs}

\author{Li Ma}
\address{Li MA,  School of Mathematics and Physics\\
  University of Science and Technology Beijing \\
  30 Xueyuan Road, Haidian District
  Beijing, 100083\\
  P.R. China }
 \email{lma17@ustb.edu.cn}

\dedicatory{This paper is dedicated to our teacher Prof.Hou Zixin on the occasion of his 80th birthday}

\thanks{Li Ma's research is partially supported by the National Natural
  Science Foundation of China (No.11771124)}

\begin{abstract}
In this article, we
first introduce the concepts of vector fields and their divergence, and we recall the concepts of the gradient, Laplacian operator, Cheeger constants, eigenvalues, and heat kernels on a locally finite graph $V$. We give a projective characteristic of the eigenvalues. We also give an extension of Barta Theorem. Then we introduce the mini-max value of a function on a locally finite and locally connected graph. We show that for a coercive function on on a locally finite and locally connected graph, there is a mini-max value of the function provided it has two strict local minima values.
We consider the discrete Morse flow for the heat flow on a finite graph in the
locally finite graph $V$. We show that under suitable assumptions on the graph one has a
weak discrete Morse flow for the heat flow on $S$
 on any time interval. We also study the heat flow with time-variable potential and its discrete Morse flow.
We propose the concepts of harmonic maps from a graph to a Riemannian manifold and pose some open questions.

{ \textbf{Mathematics Subject Classification 2010}: 05C50, 05C05, 05C30, 05C99, 58G99, 58G20.}

{ \textbf{Keywords}: finite graphs, vector field, eigenvalues, mini-max value, heat flow, discrete Morse flow, harmonic maps}
\end{abstract}

\maketitle
\tableofcontents

\newpage

\section{Introduction}
  Data in mathematics is considered as function from a set to a vector space such as $R^n$, the n-dimensional Euclidean space. The structure of the data is reflected by the spectrum of combination Laplacian on graphs and related evolution equations. The spectrum may be characterized by the topology of the underlying working space defined on the graph. In the recent interesting work \cite{AY}, Allen, Lippner, Chen, Y. et al. consider the population evolution using the analysis of graph theory. We consider here some basic ingredients of analysis on graphs such as the concepts of vector fields, mini-max value of a function defined on a locally finite graph, the evolution of heat flow of datum from graphs, and related concepts of harmonic maps from a graph to a Riamannian manifold.

  This work consists of two parts. One is about the concepts of vector fields and their divergence, eigenvalue estimations \cite{C} \cite{D}, mini-max value, and related problem on graphs, and the other is for heat equation.
Recently we have seen a lot works on heat equation and eigenvalue estimations and related problem on graphs. We pay more attention to the Discrete Morse flow method to the heat equations.  We give a projective characteristic of the eigenvalues. We shall touch some small part of this development and we discuss the Cheeger constants, eigenvalues, and heat kernels on a finite graph in the
locally finite graph $V$. For the estimations about Cheeger constants and isoperimetric inequality, we also refer to the interesting work of Dodziuk \cite{D} and the recent interesting book \cite{G}.
One may also refer to know more about the surjective property of the combination Laplacian operator \cite{D2}.
We show that under suitable assumptions on the initial data $u(0)$ one has a
 discrete Morse flow for the heat flow on $S$
 on any time interval. We also study the heat flow with time-variable potential and its discrete Morse flow. Perhaps the interesting thing here is that we introduce the concepts of vector fields and their divergence on a locally finite graph.

We have studied the discrete Morse flows for 2-dimensional Ricci flows and porous medium equations in \cite{MW}.  As a numerical method, the discrete Morse flow method can be used to study many problems from applied mathematics, for example, problems from the population evolution, control theory and complex systems (see \cite{O}  for more references).
The precise results are stated and proved in the following
sections.  There are relatively few results about computational models for the
time variable heat flow in any graph, one may refer to \cite{BL} (see also \cite{BK} and \cite{OS} for related references). The questions about suitable discrete Morse flows for Yamabe flows and related problems \cite{A} \cite{Ar}  \cite{DK}  are still in investigation and the difficulty underlying this flows is the lack of compactness.

The plan of this paper is as
follows: In section \ref{sect1}, we discuss some elementary part of analysis on finite graphs and heat flow.
In section~\ref{sect6}, we introduce the discrete Morse flow method to the heat flow with prescribed reaction term
with the parabolic
initial data and boundary data. The main result of this section is Theorem \ref{mali4}.

\section{Part 1; vector fields and Eigenvalues}\label{sect1}

\subsection{locally finite graphs as metric spaces}
A graph $G=(V,E)$ is a pair of the vertex-set $V$ and the edge-set
$E$. Each edge is an unordered pair of two vertices. If there is an
edge between $x$ and $y$, we write $x \sim y$. We assume that $G$ is
{\it locally finite},
i.e., there exists a constant $c>0$ such that
$$d_x:={\rm deg}(x) := \#\{ y \in V: (x,y) \in E\} \le c$$ for all $x\in G$. Let
$$
N(x)=\{ y \in V: (x,y) \in E, \ y\not= x\}.
$$
We may define $O(x):=N(x)\bigcup\{x\}$ an open neighborhood of $x$ and use them to define a topology on the graph $G$.
In fact, we can define the distance function $d(x_0,x_1)$ for $x_0,x_1\in V$. Introducing
the path set
$$
\mathbb{P}=\{P=x_0z_1...z_mx_1; z_i\in S\},
$$
and define the length of the path $P$ by
$$
L(P)=m+1
$$
Then we define
$$
d(x_0,x_1)=\inf_{P\in \mathbb{P}} L(P).
$$
Then $(V,d(\cdot,\cdot))$ is a complete metric space (in fact a length space in the sense of M.Gromov and then one may introduce the Hausdorff metric to handle the convergence of a sequence of graphs).

We give a remark here. In this metric space, the Monge problem always has a solution. Recall that for any two disjoint sets
$A=\{x_1,...,x_N\}$ and $B=\{y_1,...,y_N\}$, $A\bigcap B=\empty$, the Monge problem is to find a minimizer of the minimization problem
$$
M=\inf \{\sum_1^N d(x_i, y_{\sigma(i)}); \sigma:\{1,...,N\}\to \{1,...,N\} \ is \ a \ permutation\}.
$$
This is a minimization problem on finite set and it has at least one solution.

We say that $G$ is locally connected if each $N(x)$ is connected, i,e., for any $y_0,y_1\in N(x)$, there are two path $P_+=P_+(y_0,y_1)$ and $P_-=P_-(y_0,y_1)$ in $N(x)$  connecting
$x_0$ and $x_1$, i.e.,
$$P_+=y_0\sim x_1\sim ...\sim x_m\sim y_1, \ x_i\in N(x),$$
$$
P_-=y_0\sim z_1\sim ...\sim z_n\sim y_1, \ z_i\in N(x).
$$ Then we have the decomposition $N(x)=P_+\bigcup P_-$.
In this work we assume that $V$ is a infinite ($\sharp(V)=\infty$) locally finite and locally connected. Sometimes we write by $dx=d_x$.

Let $S$ be a finite subset of $V$, the {\it subgraph} $G(S)$
generated by $S$ is a graph, which consists of the vertex-set $S$
and all the edges $x \sim y, \ x,y \in S$ as the edge set.  The
boundary $\delta S$ of the induced subgraph $G(S)$ consists of all
vertices that are not in $S$ but adjacent to some vertex in $S$.We
assume that the subgraph $G(S)$ is connected. We now
recall some facts from the book \cite{Ch}. Sometimes people may like
to write
$$
\bar{S}=S\bigcup \delta S.
$$

\subsection{Gradient, divergence, and Laplacian}

For a function $f: S\bigcup \delta S\to {\mathbb C}$, let
$$
\nabla_{xy} f=f(y)-f(x)
$$
for $y\sim x$. Then
$$
\nabla_{xy} f^2=f(y)^2-f(x)^2=(f(y)+f(x))(f(y)-f(x))
$$
$$
=(\nabla_{xy} f)^2+2f(x)\nabla_{xy} f.
$$
Notice that there is a difference from the differential calculus.

For two function $f$ and $g$, we have
$$
\nabla_{xy} (fg)=f(y)g(y)-f(x)g(x)=f(y)\nabla_{xy} g+g(x)\nabla_{xy} f
$$
$$
=\nabla_{xy} f\nabla_{xy} g+f(x)\nabla_{xy} g+g(x)\nabla_{xy} f.
$$

We define the gradient of $f$ at $x\in S$ by
$$
\nabla f(x)=(\nabla_{xy} f)_{y\in N(x)}.
$$
A vector field $W$ on $S$ can be defined by
$$
W(x)=(w(xy))_{y\in N(x)}, \ w(xy)\in \mathbb{R}
$$
and the scalar product of two vector fields is given by
$$
W(x)\cdot U(x)=\frac{1}{d_x}\sum_{y\sim x} w(xy)u(xy).
$$
Hence
$$
|W(x)|^2=\frac{1}{d_x}\sum_{y\sim x} w(xy)^2.
$$
Of course, one may the concept of a vector filed on oriented edges.

\begin{Def} 1. For a function $f(x)$ and a vector field $W(x)$, we define the product between them by
$$
fW(x)=(\frac12 (f(x)+f(y))W(xy))_{y\in N(x)}.
$$

2. The directional derivative of $f$ along the vector field $W$ at $x\in S$ is defined by
$$
W(f)(x):=\frac{1}{d_x}W\cdot \nabla f(x)=\frac{1}{d_x}\sum_{y\in N(x)}w(xy)\nabla_{xy} f.
$$
\end{Def}

Then
$$
W(fg)(x)=\sum_{y\in N(x)}w(xy)\nabla_{xy} (fg).
$$
As a consequence of this, we have
$$
\nabla f(f)(x)=\sum_{y\in N(x)}\nabla_{xy} f\nabla_{xy} f=|\nabla f(x)|^2d_x.
$$
This means that our definition of the gradient is reasonable.

\begin{Def} For the vector field $W(x)$, we may define the divergence of it by
$$
div W(x)=\frac{1}{d_x}\sum_{y\in N(x)} w(xy).
$$
Then we have
$$
div \nabla f(x)=\frac{1}{d_x}\sum_{y\in N(x)} \nabla_{xy} f:=\Delta f(x).
$$
which is the Laplacian operator of the function $f$.
\end{Def}
So,
$$
 div (fW(x))=\frac{1}{d_x}\sum_{y\in N(x)} \frac12 (f(x)+f(y)w(xy)=f(x) div W(x)+\frac12 \nabla f(x)\cdot W(x).
$$

Note that for $w(xy)=-w(yx)$ for $x\sim y\in S$, we have
$$
\int_Sdiv W(x)=\sum_{x\in S} \sum_{y\in N(x)} w(xy)=0.
$$
This is a very useful formula and we shall this the \emph{Divergence theorem}. The proof is below. Set $w(xy)=0$ for any $x\nsim y$. Then
\begin{align*}
\sum_{x\in S} \sum_{y\in N(x)} w(xy)&=\sum_{x\in S} \sum_{y\in S} w(xy)\\
&=\frac12 \sum_{x\in S} \sum_{y\in S} w(xy)+\frac12 \sum_{x\in S} \sum_{y\in S} w(yx)=0.
\end{align*}

In general we may define a Hessian sum of two vectors $w_1=(w_1(xy))$ and $w_2=(w_2(xy))$ at $x$ by
$$
(w_1+_Hw_2)(x)=w_1(x)+_Hw_2(x)=(\frac12 (w_1(xy)+w_2(xz))_{y,z\in N(x)},
$$
which is a $d_x\times d_x$ matrix. It is trace is
$$
trace (w_1+_Hw_2)(x)=\sum_{y\in N(x)}\frac12 (w_1(xy)+w_2(xy)).
$$

We remark that we may define the Hessian matrix of $f$ at $x$ by
$$
H(f)(x)=\frac12 (f(y)+f(z)-2f(x))_{y,z\in N(x)},
$$
which is again a $d_x\times d_x$ matrix and it is $\nabla f(x)+_H\nabla f(x)$.
Note that $trace H(f)(x)=\Delta f(x)$.

We define the divergence of $H(f)$ by
$$
div H(f)(x)=d_x(f(y)-f(x))_{y\in N(x)}=d_x\nabla f(x).
$$

Its divergence of $w_1+_Hw_2$ at$x$ is defined to be
$$
div (w_1+_Hw_2)(x)=(w_1(xy)+w_2(xy))_{y\in N(x)}.
$$

With this understanding, we know the the maximum principle below.

\begin{Thm}
At any local minimum point of the function $f$ on $V$, we have $\nabla f(x)\geq 0$ is a non-negative vector and $H(f)(x)$ is a non-negative matrix,i.e., each element in the matrix is non-negative.
Then
$$
\Delta f(x)=trace H(f)(x)\geq 0.
$$
\end{Thm}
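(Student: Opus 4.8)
The plan is to unwind the definition of a local minimum on the graph and read off each conclusion directly from the inequality $f(y) \ge f(x)$ for the neighbours $y$ of $x$. First I would fix the working notion of a local minimum: a vertex $x \in V$ is a local minimum of $f$ if there is an open neighbourhood $O(x) = N(x) \cup \{x\}$ on which $f$ attains its least value at $x$, that is $f(x) \le f(y)$ for every $y \in N(x)$. This is the only structural input the argument needs, and it is purely combinatorial since $N(x)$ is finite by local finiteness. From this inequality the first assertion is immediate: for each $y \in N(x)$ the component $\nabla_{xy} f = f(y) - f(x) \ge 0$, so $\nabla f(x) = (\nabla_{xy} f)_{y \in N(x)}$ is a non-negative vector.

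Next I would verify the Hessian matrix entrywise. Recalling $H(f)(x) = \frac12 (f(y) + f(z) - 2f(x))_{y,z \in N(x)}$, I would rewrite the generic entry as $\frac12\big((f(y) - f(x)) + (f(z) - f(x))\big) = \frac12(\nabla_{xy} f + \nabla_{xz} f)$, a sum of two of the already-established non-negative quantities. Hence every entry of $H(f)(x)$ is $\ge 0$, so $H(f)(x)$ is a non-negative matrix, which is the second assertion.

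Finally I would specialize to the diagonal $y = z$, where the entry reduces to $\nabla_{xy} f = f(y) - f(x)$, and sum over $y \in N(x)$ to get $\mathrm{trace}\, H(f)(x) = \sum_{y \in N(x)} (f(y) - f(x)) \ge 0$ as a sum of non-negative terms. Invoking the earlier identity $\Delta f(x) = \mathrm{trace}\, H(f)(x)$ then yields $\Delta f(x) \ge 0$. I do not expect any genuine obstacle here, as the whole statement is a direct transcription of the definition of a local minimum; the one point deserving a word of care is the bookkeeping of the normalization constant in $\Delta f(x) = \frac{1}{d_x}\sum_{y \in N(x)} \nabla_{xy} f$, but since $1/d_x > 0$ the sign of $\Delta f(x)$ agrees with the sign of the trace regardless of the convention, so the conclusion is unaffected.
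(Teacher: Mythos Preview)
Your proposal is correct and matches the paper's approach: the paper offers no formal proof, treating the statement as an immediate consequence of the definitions of $\nabla f(x)$, $H(f)(x)$, and local minimum, which is exactly what you have unwound. Your remark about the $1/d_x$ normalization is also well-placed, since the paper's identity $\operatorname{trace} H(f)(x)=\Delta f(x)$ is off by that positive factor, but as you note this does not affect the sign.
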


\subsection{Mini-max theorem}

We now try to define the mini-max value point for a real function on the locally finite and locally connected graph $G=(V,E)$.
For this purpose we need some preparation.

\begin{Def} 1. If there is a point $x\in S$ such that $\nabla_{xy} f\geq 0$ ($\nabla_{xy} f> 0$) for all
$y\in N(x):=\{y\in S; y\sim x\}$, we say $x$ is a local (strict) minimum point of $f$.

 2. If there is a point $x\in S$ such that $\nabla_{xy} f\leq 0$ ($\nabla_{xy} f< 0$) for all
$y\in N(x)$, we say $x$ is a local (strict) maximum point of $f$.

3. Given a point $x\in S$ such that $\sharp N(x)\geq 4$. If there is a path $x_0\sim x\sim x_1; \ x_0\in N(x), \ x_1\in N(x)$ such that
$\inf\{f(x_0),f(x_1)\}\geq f(x)$, and there are two points $y_0\in P_+(x_0,x_1)$ and $y_1\in P_-(x_0,x_1)$ such that $ f(y)\leq f(x)$ for $y_i$, we then call $x$ is a mini-max point of $f$.

4. We say $f$ is coercive if $f(x)\to \infty$ as $x\to \infty$.
\end{Def}

Then we have the following result.
\begin{Thm} Given a real function $f$ on a locally finite graph $G=(V,E)$ with $\sharp N(x)\geq 4$ for each $x\in V$. Assume that $f$ is coercive. Suppose that there are two local strict minimum points $z_0$ and $z_1$. Then there is a mini-max point of $f$.
\end{Thm}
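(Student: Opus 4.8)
The plan is to treat this as a discrete Mountain Pass Theorem and to produce the mini-max point as the highest vertex of an optimal path joining the two strict minima $z_0$ and $z_1$.

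First I would introduce the mini-max value
$$
c=\inf_{\gamma}\ \max_{v\in\gamma} f(v),
$$
where $\gamma$ ranges over all paths in $V$ joining $z_0$ to $z_1$. Coercivity enters exactly here: since $f(x)\to\infty$ as $x\to\infty$, every sublevel set $\{f\le a\}$ is a finite subset of $V$. Fixing one reference path gives a finite bound $c_0$, so in computing $c$ only paths lying in the finite set $\{f\le c_0\}$ compete; since deleting loops never raises the maximum, the infimum is realised by a simple path. Hence $c$ is attained by a simple path $\gamma^{*}=(z_0=v_0\sim v_1\sim\cdots\sim v_k=z_1)$. Because $z_0$ is a strict local minimum, $f(v_1)>f(v_0)$ and so $c>f(z_0)$; likewise $c>f(z_1)$. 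Thus the value $c$ is attained along $\gamma^{*}$ only at interior vertices.

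Among all simple optimal paths I would then choose $\gamma^{*}$ minimising the number of vertices at which $f=c$, and take $x=v_a$ to be the first vertex of a maximal run of level-$c$ vertices, so that $f(v_{a-1})<c$ while $f(v_{a+1})\le c$. The heart of the argument is played out inside $N(x)$, which by local connectedness behaves like a topological cycle: the vertices $v_{a-1},v_{a+1}\in N(x)$ split $N(x)$ into the two arcs $P_+$ and $P_-$. I claim each open arc must contain a vertex with $f\ge c$. Indeed, if one arc $Q$ had all interior values $<c$, I could reroute $\gamma^{*}$ by replacing the segment $v_{a-1}\sim x\sim v_{a+1}$ with $v_{a-1}\sim Q\sim v_{a+1}$; this deletes the level-$c$ vertex $x$, introduces no new level-$c$ vertex, keeps the maximum $\le c$ (hence $=c$ since $c$ is the mini-max value), and after loop-removal yields a simple optimal path with strictly fewer level-$c$ vertices, contradicting the choice of $\gamma^{*}$. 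Having produced high vertices $x_0\in P_+$ and $x_1\in P_-$ with $f(x_0),f(x_1)\ge c=f(x)$, the cyclic order on $N(x)$ is $v_{a-1},x_0,v_{a+1},x_1$, so the arcs $P_+(x_0,x_1)$ and $P_-(x_0,x_1)$ contain $v_{a+1}$ and $v_{a-1}$ respectively, both with $f\le c$. Taking $y_0=v_{a+1}$ and $y_1=v_{a-1}$ exhibits exactly the configuration demanded by the definition, so $x$ is a mini-max point; the hypothesis $\sharp N(x)\ge 4$ is what guarantees the four vertices $x_0,x_1,v_{a-1},v_{a+1}$ genuinely fit on the cycle.

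I expect the main obstacle to be the combinatorics of this rerouting step rather than the variational set-up. One must verify that $N(x)$ really splits into two arcs whose union is all of $N(x)$ so that ``two high vertices separate two low vertices'' is well posed; handle the plateau case $f(v_{a+1})=c$, which is resolved above by noting that a level-$c$ path-neighbour is still an admissible low vertex $y_i$ because the definition only asks $f(y_i)\le f(x)$; and confirm that loop-removal after rerouting can only decrease, never reintroduce, level-$c$ vertices. By contrast, the attainment of $c$ through coercivity and the pushing of the critical value strictly into the interior through the strict-minimum hypothesis are routine.
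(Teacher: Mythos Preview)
Your proposal is correct and follows essentially the same mountain-pass/rerouting strategy as the paper: define the mini-max level $c$ over paths from $z_0$ to $z_1$, use coercivity to realise $c$ on an actual path, and then argue that some level-$c$ vertex on an optimal path cannot be bypassed inside its neighbourhood cycle, which forces the required high/low configuration in $N(x)$.

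The only real difference is organisational. The paper proceeds inductively: it walks through the level-$c$ vertices $z_{i_1},\dots,z_{i_N}$ on a fixed optimal path and bypasses each one in turn via a low sub-path in $N(z_{i_j})\setminus P$, until it reaches a vertex $z$ that cannot be bypassed, i.e.\ $f(y)\ge c$ for every $y\in N(z)\setminus P$; the two path-neighbours then serve as the low points. You instead select at the outset an optimal path with the fewest level-$c$ vertices and argue that both arcs of $N(x)$ between the two path-neighbours must contain a vertex of value $\ge c$, else a single reroute would contradict minimality. Your version has the advantage of matching the definition of a mini-max point more explicitly (you name $x_0,x_1,y_0,y_1$ and their positions on the $N(x)$-cycle), whereas the paper's stronger conclusion that \emph{all} of $N(z)\setminus P$ is high leaves the final verification implicit. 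Both arguments rest on the same facts: $N(x)$ is a cycle by the paper's notion of local connectedness, and $\sharp N(x)\ge4$ guarantees room for the four distinguished neighbours.
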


The proof of this theorem is of a mountain pass type and we give it below.
\begin{proof}
Recall the path set
$$
\mathbb{P}=\{P=x_0z_1...z_mx_1; z_i\in S\},
$$
and we define
$$
c=\inf_{P\in \mathbb{P}} \sup_{z\in P} f(z).
$$
Note that $c>\inf\{f(x_0),f(x_1)\}$. Taking a minimizing path sequence $\{P_j\}$ such that
$$
\sup_{z\in P_j} f(z)\to c
$$
and
$$
\sup_{z\in P_j} f(z)\leq  c+1.
$$
By coercivity of $f$, we know that there are only finite many $P_j$'s with the property that $\sup_{z\in P_j} f(z)\leq  c+1$.
Then we know that there exists a path $P=x_0z_1...z_mx_1$ such that
$$
c=\sup_{z\in P} f(z)=f(z_0).
$$
Let
$$
Z=\{z\in P; f(z)=c\}=\{z_{i_j}\}_{j=1}^N.
$$
We start search from $z_{i_1}$. If there is a path
$$z_{i_1}\sim y_1\sim...\sim w; y_1\in N(z_{i_1})\backslash P, \ w_1\in P\bigcap N(z_{i_1})$$ such that
$$
f(y_m)< c, \forall y_m
$$
we may modify the path $P^x$ to get a new path $P_1$ by replacing $z_{i_1}$ by $y_1\sim...\sim w$.
If $Z=\{z_{i_1}\}$, then we have
$$
\sup_{z\in P^x} f(z)<c,
$$
which is impossible by the definition  of $c$.
So we are done in this case. In other cases, we search at $z_{i_2}$,
 and by induction we get a point $z\in Z$ such that
$$
f(y)\geq c,  \ \forall y\in N(z)\backslash P.
$$
and this $z$ is the mini-max we desired.

\end{proof}

\subsection{Volume and Cheeger constants}

Let $A,B\subset V$, we let
$E(A,B)$ is the set of edges with one endpoint in $A$ and one endpoint in $B$.

The volume of finite graph $S$ is
$$
|S|=vol(S)=\sum_{x\in S} d_x.
$$
We let $S^c$ be the complement of $S$ in $V$. Then $\delta S=\delta S^c$, which corresponds in  one-to-one way to the edge set $E(S,S^c)$.
 We denote by $|E(A,B)|$
the number of edges in $E(A,B)$. We define
$$
h_G(S)=\frac{|E(S,S^c)|}{\inf(vol(S),vol(S^c))}
$$
and the Cheeger constant (see also \cite{C})
$$
h_G=\inf_{S} h_G(S).
$$
The constant $h_G$ for $G=(V,E)$ can also be defined as
$$
h_G=\inf_{f\not=0} \sup_{c\in \mathbb{R}}\frac{\sum_{\{xy\in E\}} | \nabla_{xy} f|}{\int_V|f-c|}.
$$

We may also define
$$
g_G(S)=\frac{|\delta S|}{\inf(vol(S),vol(S^c))}
$$
and the Cheeger constant
$$
g_G=\inf_{S} g_G(S).
$$
Clearly $g_G\geq h_G$.

The open question is when the Cheeger constants can be achieved.

We now recall that the integration of $f$ over $S$ and the $L^2$ norm of $f$.
$$
\int_{\bar{S}} f = \sum_{x \in \bar{S}} f(x)d_x, \quad
\|f\|_S^2=\int_{\bar{S}}|f|^2=\sum_{x\in \bar{S}} f(x)\overline{f(x)}.
$$
The Dirichlet energy of $f$ on $S$ is defined by
$$
\|\nabla f\|_S^2=\sum_{\{x,y\in \bar{S}\}}|f(x)-f(y)|^2=\int_S|\nabla f|^2(x)dx.
$$

\subsection{Poincare inequality and Boundary conditions}

We say that $f:S\bigcup \delta S\to {\mathbb C}$ satisfies the {\it
Neumann boundary condition} if for all $x\in \delta S$,
$$
\sum_{\{y\in S;y\sim x\}} (f(y)-f(x))=0.
$$

Recall that the {\it Laplacian operator} can be written as
$$
(\Delta f)(x) = \frac{1}{d_x}\sum_{y: y \sim x} (f(y) - f(x)) = \frac{1}{d_x}\sum_{y: y \sim
x} \nabla_{xy} f.
$$
Also the Neumann condition can be written as
$$
\sum_{\{y\in S;y\sim x\}} \nabla_{xy} f=0, \quad \forall x \in
\delta S.
$$

We say that $f:S\bigcup \delta S\to {\mathbb C}$ satisfies the {\it
Dirichlet boundary condition} if  $f(x)=0$ for all $x\in \delta S$.

We now recall the Poincare inequality on the subgraph $S$ ( see Ch. 2 in \cite{Ch}).
Once we have $g_G>0$, we have the corresponding Poincare inequality
$$
\int_S |\nabla u|^2\geq c\int_S u^2, \ \ \forall u\in H_0^1(S)
$$
for some uniform constant $c>0$.

If $h_G>0$, we have the corresponding Poincare inequality
$$
\int_S |\nabla u|^2\geq c  \int_S (u-\bar{u})^2, \ \ \forall u\in X
$$
for some uniform constant $c>0$ where
$\bar{u}=\frac{1}{|S|}\int_S u$.

\subsection{Green formula}

The importance of the boundary conditions above is the boundary term
vanishing in the formula below.

\begin{Thm}\label{green} Assume that $f:S\bigcup \delta S\to {\mathbb C}$. Then we have
\begin{equation}\label{green+1}
\int_S (\Delta f,f)=-\frac{1}{2}\int_{S}|\nabla f|^2+\sum_{x\in
S}\sum_{y\in\delta S}\overline{f(x)}\nabla_{xy}f.
\end{equation}
\end{Thm}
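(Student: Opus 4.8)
The plan is to unfold every definition, use the degree weights in the integral $\int_S$ to cancel the $\frac{1}{d_x}$ in the Laplacian, and then integrate by parts through the same symmetrization that underlies the \emph{Divergence theorem} proved above. First I would rewrite the left-hand side. Since $(\Delta f)(x)=\frac{1}{d_x}\sum_{y\sim x}\nabla_{xy}f$ and $\int_S g=\sum_{x\in S}g(x)\,d_x$, the factor $d_x$ cancels the $\frac{1}{d_x}$, leaving the degree-free double sum
$$
\int_S(\Delta f,f)=\sum_{x\in S}\overline{f(x)}\sum_{y\sim x}\nabla_{xy}f=\sum_{x\in S}\sum_{y\sim x}\overline{f(x)}\,\nabla_{xy}f .
$$
This is the clean starting point: a sum over all oriented adjacent pairs $(x,y)$ with $x\in S$.

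Next I would split each inner sum according to whether the neighbor $y$ lies in $S$ or in the boundary $\delta S$, which by the definition of $\delta S$ are the only possibilities. The contribution of the $y\in\delta S$ terms is already \emph{exactly} the boundary sum $\sum_{x\in S}\sum_{y\in\delta S}\overline{f(x)}\nabla_{xy}f$ on the right-hand side, so nothing further is needed there. It then remains to identify the interior part
$$
A:=\sum_{x\in S}\sum_{\substack{y\sim x\\ y\in S}}\overline{f(x)}\,\nabla_{xy}f
$$
with $-\tfrac12\int_S|\nabla f|^2$.

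The heart of the argument is a symmetrization in the two endpoints. In $A$ both $x$ and $y$ range over $S$ and adjacency is symmetric, so relabelling $x\leftrightarrow y$ produces a second expression for the same number; averaging the two and using $\nabla_{xy}f=f(y)-f(x)=-\nabla_{yx}f$ makes the cross terms assemble into a perfect modulus,
$$
2A=-\sum_{\substack{x,y\in S\\ x\sim y}}(f(y)-f(x))\,\overline{(f(y)-f(x))}=-\sum_{\substack{x,y\in S\\ x\sim y}}|\nabla_{xy}f|^{2}.
$$
This is exactly the discrete summation-by-parts encoded in the Divergence theorem. The remaining sum runs over \emph{ordered} interior adjacent pairs, hence counts each interior edge twice, and this doubling is precisely the origin of the coefficient $\tfrac12$; matching it against the Dirichlet energy yields $A=-\tfrac12\int_S|\nabla f|^2$, and adding back the boundary part gives \eqref{green+1}.

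I expect the only real obstacle to be the bookkeeping of this factor $\tfrac12$ together with the treatment of the boundary edges. One must fix a single convention for $\int_S|\nabla f|^2$ — the vertex form $\sum_{x\in S}|\nabla f(x)|^2 d_x=\sum_{x\in S}\sum_{y\sim x}|\nabla_{xy}f|^2$ counts each interior edge twice but each $S$–$\delta S$ edge only once, whereas the unordered edge form $\sum_{\{x,y\}}|\nabla_{xy}f|^2$ counts every edge once — and check that the chosen convention is the one for which the interior symmetrization reproduces exactly the stated coefficient $-\tfrac12$ while leaving the boundary term as written; the mismatch between these two descriptions of the energy is the subtle point. A secondary, purely routine, care is to keep the complex conjugation aligned so that the interior contributions really form the nonnegative quantities $(f(y)-f(x))\overline{(f(y)-f(x))}=|\nabla_{xy}f|^2$ rather than bare squares.
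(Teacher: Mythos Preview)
Your proposal is correct and follows essentially the same route the paper takes: the paper states that Theorem~\ref{green} ``can be verified directly'' and then proves the two-function version (Theorem~\ref{green2}) by exactly the splitting into interior and boundary neighbors followed by the $x\leftrightarrow y$ swap that you describe. Your caution about the convention for $\int_S|\nabla f|^2$ is well placed, since the paper itself is somewhat loose on whether the Dirichlet sum runs over $S$ or $\bar S$; with the interior-edge reading your computation matches the stated formula exactly.
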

This can be verified directly. In fact, we can directly verify the
following more general formula (see Theorem 2.1 in \cite{Ch} and \cite{MW1}) in a
compact form. When $f|_{\delta S}=0$, the last term is
$$
\sum_{x\in
S}\sum_{y\in\delta S}\overline{f(x)}\nabla_{xy}f=-\sum_{x\in
S}\overline{f(x)}f(x)=-\int_S |\nabla f|^2
$$
and then
$$
\int_S (\Delta f,f)=-\frac{3}{2}\int_{S}|\nabla f|^2.
$$
It is this formula used very often.

\begin{Thm}\label{green2} Assume that $f, \ g :S\bigcup \delta S\to {\mathbb R}$. Then we have
\begin{equation}\label{eq-theorem-3}
\int_{\bar{S}} (\Delta f,g)=-\frac{1}{2}\sum_{x,y\in \bar{S}
}(\nabla_{xy} f, \nabla_{xy} g)+\sum_{x \in S} \sum_{y\in \delta S: \ y \sim x} g(x)\nabla_{xy} f.
\end{equation}
\end{Thm}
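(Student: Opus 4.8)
The plan is to prove \eqref{eq-theorem-3} by a direct summation-by-parts computation, using exactly the symmetrization trick already employed in the Divergence theorem. First I would unfold the left-hand side: by the definition of the integral over $\bar S$ and of the Laplacian, the weight $d_x$ cancels against the $1/d_x$ in $\Delta f(x)=\frac{1}{d_x}\sum_{y\sim x}\nabla_{xy}f$, so that
\[
\int_{\bar S}(\Delta f,g)=\sum_{x}d_x\,\Delta f(x)\,g(x)=\sum_{x}g(x)\sum_{y\sim x}\nabla_{xy}f ,
\]
a double sum over oriented edges. The whole identity is then an assertion about how this edge-sum splits into a symmetric interior part and a boundary part.

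Next I would split each inner sum over the neighbours $y\sim x$ according to whether $y$ lies in $S$ or in $\delta S$. The edges with both endpoints in $S$ contribute $\sum_{x,y\in S,\ x\sim y}g(x)\nabla_{xy}f$, while the edges joining an interior vertex $x\in S$ to a boundary vertex $y\in\delta S$ contribute precisely the boundary sum $\sum_{x\in S}\sum_{y\in\delta S:\ y\sim x}g(x)\nabla_{xy}f$ appearing on the right of \eqref{eq-theorem-3}. For the interior part I would apply the antisymmetry $\nabla_{yx}f=-\nabla_{xy}f$: relabelling $x\leftrightarrow y$ in $\sum_{x,y\in S}g(x)\nabla_{xy}f$ rewrites it as $-\sum_{x,y\in S}g(y)\nabla_{xy}f$, and adding this to the original expression gives $2\sum g(x)\nabla_{xy}f=\sum(g(x)-g(y))\nabla_{xy}f=-\sum(\nabla_{xy}g)(\nabla_{xy}f)$, i.e. the symmetric Dirichlet pairing $-\tfrac12\sum_{x,y}(\nabla_{xy}f,\nabla_{xy}g)$. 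This is the same cancellation that forces $\sum_{x,y}w(xy)=0$ in the Divergence theorem and that produces the $-\tfrac12\int_S|\nabla f|^2$ term in Theorem \ref{green}; indeed, setting $g=f$ in the final identity should recover that formula, which I would use as a consistency check.

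The one place requiring care — and the main obstacle — is the bookkeeping at $\delta S$. I must be explicit that the symmetrization is legitimate only for those oriented edge-pairs $(x,y)$ whose reversal $(y,x)$ is also present in the sum, which is the case exactly for the interior $S$–$S$ edges; the $S$–$\delta S$ edges have no matching reversed term there and are precisely what survives as the boundary contribution. I would therefore fix once and for all the convention for $\Delta f$ on $\bar S$, namely that at an interior vertex $x\in S$ the neighbour sum ranges over $N(x)\subset\bar S$, so that no value of $f$ outside $\bar S$ is ever invoked, and then confirm that the only edges crossing the boundary are the $S$–$\delta S$ edges producing the stated term. Once the edges are sorted in this way, combining the symmetric interior sum with the boundary sum yields \eqref{eq-theorem-3}, the remaining steps being routine index manipulation.
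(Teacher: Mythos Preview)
Your proposal is correct and follows essentially the same approach as the paper: split the edge sum into interior ($S$--$S$) and boundary ($S$--$\delta S$) contributions, then exploit the antisymmetry $\nabla_{yx}f=-\nabla_{xy}f$ on the interior part to produce the Dirichlet pairing. The paper organizes the interior step slightly differently---writing $g(x)=g(y)-\nabla_{xy}g$ to obtain terms $A+B+C$, then re-expanding $B$ back over $\bar S$ to get $B=-\int_S(\Delta f,g)+C$ and solving the resulting linear equation $2\int_S(\Delta f,g)=A+2C$---but your direct symmetrization (averaging the sum with its $x\leftrightarrow y$ relabelling) is equivalent and arguably more transparent.
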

In application we may assume that $f|_{\delta S}=0$. Then we have
$$
\int_{\bar{S}} (\Delta f,g)=-\frac{1}{2}\sum_{x,y\in \bar{S}
}(\nabla_{xy} f, \nabla_{xy} g)-\sum_{x \in S} g(x)f(x)d_x.
$$
 Then
\begin{equation}\label{green5}
\int_{\bar{S}} (\Delta f,g)-\int_{\bar{S}} (\Delta g,f)=-\sum_{x \in S} g(x)f(x)d_x-\sum_{x \in S} \sum_{y\in \delta S: \ y \sim x} f(x)\nabla_{xy} g.
\end{equation}

We remark that the formula (\ref{eq-theorem-3}) is proved in
\cite{Ch} for real functions, but the complex case can be done by
writing the complex function into the sum of real and imaginary
parts. When $S=V$ and then $\delta S=\emptyset$, the last term disappear and we need to assume that
$f, g\in H^1(V)$.

For convenience, we give the proof of Theorem \ref{green2} below.
\begin{proof} As remarked above, we need only prove the result for
real functions.

 We make the following computation
$$
\begin{array}{ll}
\int_S (\Delta f, g) & = \sum_{x \in S} (\Delta f (x), g(x)) = \sum_{x \in S} \sum_{y: \ y \sim x} (\nabla_{xy} f, g(x))\\
& = \sum_{x \in S} \sum_{y\in S: \ y \sim x} (\nabla_{xy} f, g(x)) + \sum_{x \in S} \sum_{y\in \delta S: \ y \sim x} (\nabla_{xy} f, g(x))\\
& = \sum_{x, y \in S: x\sim y}(\nabla_{xy} f, g(y) - \nabla_{xy} g) + \sum_{x \in S} \sum_{y\in \delta S: \ y \sim x} (\nabla_{xy} f, g(x)) \\
& = - \sum_{x,y \in S, x \sim y} (\nabla_{xy} f, \nabla_{xy} g) + \sum_{x,y \in S, x \sim y} (\nabla_{xy} f, g(y)) \\
& + \sum_{x \in S} \sum_{y\in \delta S: \ y \sim x} (\nabla_{xy} f,
g(x)) =: A + B + C
\end{array}
$$
where
$$
A=- \sum_{x,y \in S, x \sim y} (\nabla_{xy} f, \nabla_{xy} g),
$$
$$
B= \sum_{x,y \in S, x \sim y} (\nabla_{xy} f, g(y)),
$$
and
$$
 C=\sum_{x \in S} \sum_{y\in \delta S: \ y \sim x} (\nabla_{xy} f,
g(x))
$$
For the term $B$, we have
\[
\begin{array}{ll}
B & = \sum_{x,y \in S, x \sim y} (\nabla_{xy} f, g(y)) \\
& = \sum_{y \in S} (\sum_{x\in S \cup \delta S, x \sim y} \nabla_{xy} f, g(y)) - \sum_{y \in S} (\sum_{x\in \delta S, x \sim y} \nabla_{xy} f, g(y)) \\
& = \sum_{y\in S} (-\Delta f(y), g(y)) + \sum_{y \in S} (\sum_{x\in \delta S, x \sim y} \nabla_{yx} f, g(y)) \\
& = - \int_S (\Delta f, g) + C. \\
\end{array}
\]
(For the second term of the left hand side of the above equation, we
change $x$ to $y$,  $y$ to $x$, and see that this term is just $C$).
It follows that
\[
\int_S (\Delta f, g) = A + (- \int_S (\Delta f, g) + C) + C.
\]
Then we have
$$
2\int_S (\Delta f, g) = A  +2 C.
$$
We then re-write this into (\ref{eq-theorem-3}). The proof is
complete.
\end{proof}

We give two remarks here. One is below.
For $f:S\bigcup \delta S\to {\mathbb C}$ satisfying the Dirichlet
condition, the boundary term in Theorem \ref{green} can be written
as
$$
-\sum_{x\in S}\sum_{y\in\delta
S}\overline{(f(y)-f(x))}\nabla_{xy}f=-\sum_{x\in S}\sum_{y\in\delta
S}|\nabla_{xy}f|^2,
$$
which is real. Then
$$
\int_S (\Delta f,f)=-\frac{3}{2}\int_{S}|\nabla f|^2.
$$
This fact is useful in the estimation of $L^2$ and Dirichlet energy along the heat flow.

The other is that we may generalize the above formula in the following way.

\begin{Thm}\label{green5} Given a vector field $W$ on $\bar{S}$. Assume that $f :S\bigcup \delta S\to {\mathbb R}$. Then we have
\begin{equation}\label{eq-theorem-5}
\int_{\bar{S}} (div W,f)=-\frac{1}{2}\int_S
W\cdot \nabla f(x)+\int_{S} div(fW).
\end{equation}
\end{Thm}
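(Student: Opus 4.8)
The plan is to obtain the integral identity $(\ref{eq-theorem-5})$ directly from the pointwise Leibniz rule for the divergence that has already been recorded in the text, namely
$$
div (fW)(x) = f(x)\, div W(x) + \frac{1}{2}\nabla f(x)\cdot W(x),
$$
which was derived by writing $\frac{1}{2}(f(x)+f(y)) = f(x) + \frac{1}{2}\nabla_{xy} f$ inside the definition of the product $fW$. The characteristic factor $\frac{1}{2}$ is exactly the one produced by the averaging convention in $fW(x) = (\frac{1}{2}(f(x)+f(y))w(xy))_{y\in N(x)}$, so no new computation is needed to establish it. In effect the theorem is just the integrated form of this product rule, i.e.\ discrete integration by parts.

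First I would rearrange this pointwise identity so that the integrand of the left-hand side of $(\ref{eq-theorem-5})$ stands alone:
$$
(div W, f)(x) = f(x)\, div W(x) = div (fW)(x) - \frac{1}{2}\, W\cdot \nabla f(x),
$$
where I have used the symmetry $\nabla f(x)\cdot W = W\cdot \nabla f(x)$ of the scalar product of vector fields. Since this equality holds at every vertex $x$, the next step is simply to multiply through by $d_x$ and sum, using the integration convention $\int g = \sum_x g(x)\, d_x$. Linearity of summation then yields
$$
\int (div W, f) = \int div (fW) - \frac{1}{2}\int W\cdot \nabla f,
$$
which is $(\ref{eq-theorem-5})$ after reordering the two terms on the right.

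The one point demanding care---and the step I expect to be the main, if modest, obstacle---is the bookkeeping of the domain of summation. The left-hand integrand is naturally summed over all of $\bar S = S\cup\delta S$, while the right-hand side is written over $S$ alone; reconciling the two requires either checking that the vertices of $\delta S$ contribute nothing (for instance under a boundary hypothesis such as $w(xy)=0$ on edges incident to $\delta S$, paralleling the vanishing boundary terms in Theorem \ref{green} and Theorem \ref{green2}) or absorbing those contributions into the convention for $W$ on $\bar S$. Once the ranges of summation are matched, the verification is purely mechanical: substituting $\nabla_{xy} f = f(y)-f(x)$ one checks the elementary identity $-\frac{1}{2}\nabla_{xy} f + \frac{1}{2}(f(x)+f(y)) = f(x)$, so that both sides collapse to $\sum_x f(x)\sum_{y\in N(x)} w(xy)$ and the theorem follows.
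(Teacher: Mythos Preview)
Your proposal is correct and follows essentially the same route as the paper: the paper's proof simply expands $\int_S f(x)\,div W(x)$ as a double sum, splits $f(x)=\tfrac12(f(x)-f(y))+\tfrac12(f(x)+f(y))$, and identifies the two pieces as $-\tfrac12\int_S W\cdot\nabla f$ and $\int_S div(fW)$, which is exactly the pointwise Leibniz identity you invoke, integrated. Your observation about the $\bar S$ versus $S$ discrepancy is apt; the paper itself silently replaces $\int_{\bar S}$ by $\int_S$ in its computation without comment.
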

Note that
$$
\int_S
W\cdot \nabla f(x)=\sum_{x\in S}\sum_{y\sim x}w(xy)(f(y)-f(x))
$$
and
$$
\int_S
div W(x) f(x)=\sum_{x\in S} div W(x)f(x)d_x=\sum_{x\in S}\sum_{y\sim x}w(xy)f(x)
$$
$$
=\frac12\sum_{x\in S}\sum_{y\sim x}w(xy)(f(x)-f(y))+\frac12\sum_{x\in S}\sum_{y\sim x}w(xy)(f(x)+f(y))
$$
$$
=-\frac12\int_S
W\cdot \nabla f(x)+\int_S div(fW).
$$

As we have mentioned above, the divergence term is zero provided there holds the asymmetry condition that $fW(xy)=-fW(yx)$ for $x\sim y$. This fact will be used in the discussion of harmonic maps from a graph to a Riemannian manifold.

\subsection{Eigenvalues and topology}

Given a function $Q:\bar{S}\to R$.
With Neumann or Dirichlet boundary condition, the Laplacian operator
on $S$ has finite eigenvalues  $\lambda_j\in \mathbb{R}$ (in monotone non-decreasing manner) with the corresponding
eigen-functions $\phi_j(x)$ \cite{Ch}, i.e.,
$$
(-\Delta+Q(x)) \phi_j=\lambda_j\phi_j, \ \ \ in \ \ S, \ \ \int_{\bar{S}}
|\phi_j|^2=1.
$$
In short, we can write this as
$$
L:=-\Delta +Q(x)= \sum_j \lambda_jI_j
$$
where $I_j$ is the projection on to the j-th eigenfunction $\phi_j$
of the induced subgraph $S$ (see p.145 in \cite{Ch}). We may define the Rayleigh quotient for nontrivial function $f$ by
$$
E(f)=\frac{\int_S fL fdx}{\int_S f^2dx}.
$$
Since $E(\lambda f)=E(f)$ for $\lambda\not=0$, we know that
$E$ is well-defined on the projective space
$$
P(X)=X\backslash \{0\}/ \mathbb{R}^{x}, \ \ \mathbb{R}^{x}=\mathbb{R}\setminus \{0\},
$$
where $$X:=H^1(S)=\{f:S\to R; \int_S|\nabla f|^2dx+\int_S |f(x)|^2dx<\infty\}.$$

For the Dirichlet boundary condition, we define the Sobolev space
$$
H_0^1(S)=\{f:S\to R; f|_{\delta S}=0, \ \int_S|\nabla f|^2dx+\int_S |f(x)|^2dx<\infty\}.
$$
We shall replace $X$ by $H_0^1(S)$ when we consider the problems with Dirichlet boundary condition. In particular, the first eigenvalue $\mu_1$ of $L$ with Dirichlet boundary condition is defined by
$$
\mu_1(S)=\inf\{\int_S fL fdx; \ f|_{\delta S}=0, \ \int_S f^2dx=1\}.
$$
We set $\mu_0=0$.

To simply matter, we only consider the Neumann boundary value problem we use an idea from M.Gromov.
Since for any nontrivial $f\in X$, if for some $\lambda\in \mathbb{R}$,
$$
L f=\lambda f, \ \ in \ S.
$$
Then
$E(f)=\lambda$. Using the Euler-Lagrange equation we know that the eigen-values of $L=-\Delta+Q$ (where $Q:S\to R$ is nontrivial in the sense that $\int_S Q\not=0$) corresponding to the critical values of the Rayleigh quotient $E$ on $P(X)$.
For any $\lambda\geq 0$, let
$$
X_\lambda=\{f\in X, E(f)\leq \lambda\}, \ \ X_0=span\{1\}=R,
$$
Let $$
X_j=span\{\phi_i, i\leq j\}.
$$
Note that $\lambda_0=0$ is not eigenvalue and $dim X_j=j$. Let $\lambda<\lambda_j$. For any subspace $Y\subset X$ with dimension $j$, we have a nontrivial function $f$, which is $L^2$ orthogonal to $X_{j-1}$. Then we have
$E(f)\geq \lambda_j$. Hence, $Y\subsetneq X_\lambda$, Then we have proven

\begin{Thm}\label{gromov}
The eigenvalues $\lambda_j$ is the minimal number of $\lambda$'s such that the level set
$$
P_\lambda=E^{-1}[0,\lambda]=\{f\in P(X), E(f)\leq \lambda\}\subset P(X)
$$
contains a projective subspace of dimension $j-1$.
\end{Thm}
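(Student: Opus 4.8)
The plan is to recognize this statement as a projective reformulation of the classical Courant--Fischer min--max principle. First I would translate the geometric condition into linear-algebraic terms: a projective subspace of $P(X)$ of dimension $j-1$ is the image $P(Y)$ of a linear subspace $Y\subset X$ with $\dim Y=j$, and the inclusion $P(Y)\subset P_\lambda$ says precisely that $E(f)\le\lambda$ for every nonzero $f\in Y$. So the assertion becomes: $\lambda_j$ is the least $\lambda$ for which some $j$-dimensional $Y\subset X$ satisfies $\sup_{0\ne f\in Y}E(f)\le\lambda$. Before proving the two resulting inequalities I would record two facts that make the Rayleigh quotient behave as in the continuous case: by Green's formula (Theorem \ref{green2}) the operator $L=-\Delta+Q$ is self-adjoint for the weighted inner product $\langle f,g\rangle=\int_{\bar S}fg$, and since $S$ is finite the space $X$ is finite-dimensional, so the spectral theorem furnishes an $L^2$-orthonormal eigenbasis $\phi_1,\dots,\phi_n$ with $L\phi_i=\lambda_i\phi_i$ and $\lambda_1\le\lambda_2\le\cdots$.

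For the upper bound I would simply exhibit the subspace $X_j=\mathrm{span}\{\phi_1,\dots,\phi_j\}$. For any $f=\sum_{i=1}^{j}c_i\phi_i$ the orthonormality gives $\int_{\bar S}f^2=\sum_i c_i^2$ and $\int_{\bar S}fLf=\sum_i\lambda_i c_i^2$, whence $E(f)=\big(\sum_i\lambda_i c_i^2\big)/\big(\sum_i c_i^2\big)\le\lambda_j$ because $\lambda_i\le\lambda_j$ for $i\le j$. Thus $P(X_j)\subset P_{\lambda_j}$, so $P_{\lambda_j}$ already contains a projective subspace of dimension $j-1$, and the least admissible $\lambda$ is at most $\lambda_j$.

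For the reverse inequality I would show that no $\lambda<\lambda_j$ is admissible, which is the step already begun in the paragraph preceding the theorem. Fix any $Y\subset X$ with $\dim Y=j$ and let $X_{j-1}=\mathrm{span}\{\phi_1,\dots,\phi_{j-1}\}$, of dimension $j-1$, whose $L^2$-orthogonal complement $X_{j-1}^{\perp}$ has codimension $j-1$. A dimension count gives $\dim Y+\dim X_{j-1}^{\perp}\ge j+(\dim X-(j-1))=\dim X+1$, so the intersection $Y\cap X_{j-1}^{\perp}$ contains a nonzero function $f$. Expanding $f=\sum_{i\ge j}c_i\phi_i$ (the coefficients for $i<j$ vanishing by orthogonality) yields $E(f)=\big(\sum_{i\ge j}\lambda_i c_i^2\big)/\big(\sum_{i\ge j}c_i^2\big)\ge\lambda_j$. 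Hence $\sup_{0\ne g\in Y}E(g)\ge\lambda_j>\lambda$, so $P(Y)\not\subset P_\lambda$; as $Y$ was arbitrary, $P_\lambda$ contains no projective subspace of dimension $j-1$. Combining the two bounds identifies the least admissible $\lambda$ with $\lambda_j$. The only real subtlety, and the step I would treat most carefully, is the dimension-counting intersection argument: it relies on the finite-dimensionality of $X$ (guaranteed because $S$ is finite) and on having a genuine $L^2$-orthonormal eigenbasis, so I would make explicit the self-adjointness of $L$ that underwrites the orthogonal decomposition $X=X_{j-1}\oplus X_{j-1}^{\perp}$.
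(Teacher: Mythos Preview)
Your proposal is correct and follows essentially the same route as the paper: the argument preceding the theorem already sketches the lower bound by picking, in an arbitrary $j$-dimensional $Y$, a nonzero $f$ that is $L^2$-orthogonal to $X_{j-1}=\mathrm{span}\{\phi_1,\dots,\phi_{j-1}\}$ and observing $E(f)\ge\lambda_j$, while the upper bound via $X_j$ is implicit. Your write-up simply makes explicit the dimension count, the eigenbasis expansion, and the self-adjointness underlying the orthogonal decomposition, which the paper leaves terse.
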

There is a topological characteristic about $$P_{\lambda_j}=\{l\in P(X); E(f)\leq \lambda_j, \ \forall f\in l\},$$
 which we omit here. See \cite{G2}.

 One may define the principal eigenvalue of the the operator $L$ by
 $$
 \mu_1(Q)=\inf\{\mu_1(S); \ \forall S\subset V\}
 $$
 There is a Barta type result for estimation of the principal eigenvalue of the operator $L$. Namely,

 \begin{Thm}\label{barta}
 Assume that there is a positive function $u$ on $V$ such that
 $$
 Lu(x)\geq \mu u(x), \ \ for  \ \ x\in V.
 $$
 Then
 $$
 \mu_1(Q)\geq \mu.
 $$
 \end{Thm}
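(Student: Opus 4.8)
The plan is to adapt the classical Barta argument, replacing the continuous ground-state comparison by its graph analogue built from a positive first Dirichlet eigenfunction. Since $\mu_1(Q)=\inf_S\mu_1(S)$ is an infimum over finite connected subgraphs, it suffices to prove $\mu_1(S)\geq\mu$ for every such $S$ and then take the infimum. So I would fix a finite connected $S$ and let $\phi$ be a minimizer of the Rayleigh quotient defining $\mu_1(S)$, with $\phi|_{\delta S}=0$ and $\int_S\phi^2=1$. Because $L=-\Delta+Q$ is self-adjoint with respect to the degree-weighted inner product $\langle f,g\rangle=\sum_{x}f(x)g(x)d_x$, its Euler--Lagrange equation gives $L\phi=\mu_1(S)\phi$ pointwise on $S$.

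First I would record that $\phi$ may be taken strictly positive on $S$. Writing $L$ as a matrix on functions on $S$ under the Dirichlet condition, the off-diagonal entries are $-1/d_x<0$ along edges, so for large $c$ the matrix $cI-L$ is entrywise nonnegative and is irreducible by connectedness of $G(S)$; Perron--Frobenius then yields a positive eigenvector for the bottom eigenvalue, which is exactly $\phi$.

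The heart of the argument is a pointwise comparison at the maximum of the ratio $g=\phi/u$. Since $u>0$ everywhere and $\phi>0$ on $S$ while $\phi=0$ on $\delta S$, the function $g$ vanishes on $\delta S$, is positive on $S$, and hence attains its maximum at some interior vertex $x_0\in S$. Maximality gives $\phi(y)/u(y)\le\phi(x_0)/u(x_0)$, i.e. $\phi(y)\le\frac{\phi(x_0)}{u(x_0)}u(y)$, for every $y\sim x_0$ (including neighbors in $\delta S$, where $\phi(y)=0$ makes the inequality automatic). Substituting this termwise into $-\Delta\phi(x_0)=\frac{1}{d_{x_0}}\sum_{y\sim x_0}(\phi(x_0)-\phi(y))$, and using $\phi(x_0)-\phi(y)\ge\frac{\phi(x_0)}{u(x_0)}(u(x_0)-u(y))$, would produce the key inequality
$$-\Delta\phi(x_0)\ \ge\ \frac{\phi(x_0)}{u(x_0)}\,(-\Delta u)(x_0).$$
Adding $Q(x_0)\phi(x_0)=\frac{\phi(x_0)}{u(x_0)}Q(x_0)u(x_0)$ to both sides and invoking $L\phi(x_0)=\mu_1(S)\phi(x_0)$ gives $\mu_1(S)\phi(x_0)\ge\frac{\phi(x_0)}{u(x_0)}Lu(x_0)$. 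Dividing by $\phi(x_0)>0$ and applying the hypothesis $Lu(x_0)\ge\mu u(x_0)$ then yields $\mu_1(S)\ge Lu(x_0)/u(x_0)\ge\mu$, which is what I want.

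The step I expect to be the main obstacle is the strict positivity of the first eigenfunction together with the guarantee that the maximum of $g$ is attained in the interior $S$ rather than on $\delta S$; both rely on the connectedness of $G(S)$ (through Perron--Frobenius) and on the positivity of $u$, and they are exactly what legitimizes the termwise Laplacian comparison. Once these are secured, the remaining estimate is the short pointwise computation displayed above, and the passage to $\mu_1(Q)$ is immediate.
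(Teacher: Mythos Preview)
Your argument is correct and is, in fact, the classical Barta maximum-principle proof: look at the ratio $\phi/u$, pick its interior maximum, and compare Laplacians termwise. The paper takes a different route. Rather than working pointwise, it pairs the eigenfunction $f$ against $u$ via the Green identity \eqref{green5}: from $\int_{\bar S}(\Delta f)u-\int_{\bar S}(\Delta u)f$ one isolates a boundary term of the form $-\sum_{x\in S}\sum_{y\in\delta S,\,y\sim x}f(x)u(y)\le 0$, and the interior term is bounded below by $(\mu-\mu_1(S))\int_{\bar S}fu$, forcing $\mu\le\mu_1(S)$. Both proofs rely on the strict positivity of the first Dirichlet eigenfunction; you justify this via Perron--Frobenius, while the paper simply asserts it. Your approach is more elementary in that it avoids the integration-by-parts machinery and uses only a one-vertex comparison, whereas the paper's argument fits naturally into the Green-formula framework it has already built and makes the role of the Dirichlet boundary (through the sign of the boundary sum) explicit.
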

 \begin{proof} Assume $S\subset V$ be a finite subgraph of $V$ and $f$ is the first eigenfunction corresponding to $\mu_1(S)$.
 Then $f>0$ in $S$ and
 $$
 Lf=\mu_1(S) f, \ \ in \ S.
 $$
  We use the Green formula \eqref{green5} above for $f$ and $g=u$. Note that
 $$
 -\sum_{x \in S} g(x)f(x)d_x-\sum_{x \in S} \sum_{y\in \delta S: \ y \sim x} f(x)\nabla_{xy} g=-\sum_{x \in S} \sum_{y\in \delta S: \ y \sim x} f(x)g(y)\leq 0.
 $$
 and
 $$
 \int_{\bar{S}} (\Delta f,g)-\int_{\bar{S}} (\Delta g,f)=-\int_{\bar{S}} (Lf,g)+\int_{\bar{S}} (Lg,f)\geq (\mu-\mu_1(S))\int_{\bar{S}} (f,g).
 $$
 Then we have
 $$
(\mu-\mu_1(S))\int_{\bar{S}} (f,g)\leq -\sum_{x \in S} \sum_{y\in \delta S: \ y \sim x} f(x)g(y)\leq 0.
 $$
Then for any finite $S$,
$\mu\leq \mu_1(S)$.
Hence, $\mu\leq \mu_1(Q)$.
 This completes the proof.
 \end{proof}

\section{Part 2: Heat equations}\label{sect6}

We may study the transport equation given by
$$
f_t(t,x)=W(t,x)\cdot \nabla f(t, x), \ t>0, \  \ f(0,x)=f_0(x)
$$
for the unknown real function $f=f(t,x)$. Since this is an ODE for any time-variable vector field $W(t,x)$ on $[0,\infty)\times S$, we have a global existence of the solution by the fundamental theorem of ODE.
Note that
$$
\frac{d}{dt}\int_S f=\int_S f_t=\int_S W(t,x)\cdot \nabla f.
$$
In below, we shall introduce the Laplacian operator $\Delta$ so that we may study the diffusion equation
$$
u_t=\Delta u+W(t,x)\cdot \nabla u +V(t,x)u, \ \ (t,x)\in (0,\infty)\times S.
$$

To make notation more concise, we replace $\Delta$ defined above by $\frac23\Delta$, but still denoted by $\Delta$. The change will be very helpful in using the Green formula above.

\subsection{Heat kernels and Green functions}

We now turn attention to heat kernel. We define the heat kernel of $L$ as
$$
\mathbf{S}_t(x,y)=\sum_jexp(-\lambda_jt)\phi_j(x)\phi_j(y), \ \
t\geq 0.
$$
The heat kernel of $L$ can be written this as
$$
\mathbf{S}_t=\sum_j e^{-\lambda_jt}I_j.
$$
Then
$$
\mathbf{S}_t=e^{-tL }=I-tL+...., \ \ S_0=I.
$$
Formally we may define the Green function for $L$ (\cite{M1}) by
$$
G(\cdot,\cdot)=\int_0^\infty \mathbf{S}_t(\cdot,\cdot) dt.
$$

 For a function $f:S\bigcup \delta S\to {\mathbb C}$, we
define
\begin{equation}\label{sch}
u(x,t)=\sum_y\mathbf{S}_t(x,y)f(y), \ \  x\in S, \ t>0.
\end{equation}

Note that
$$
\mathbf{S}_0(x,y)=I=\sum_j\phi_j(x)\phi_j(y).
$$
Then for any $f:S\bigcup \delta S\to {\mathbb C}$,
$$
u(x,0)=\mathbf{S}_0(x,y)f(y)=\sum_y\sum_j\phi_j(x)\phi_j(y)f(y)=f(x).
$$
Then we can directly verify that the function $u$ satisfies the
heat equation
$$
\partial_tu(x,t)+L u(x,t)=0, \ \ x\in S, \ t>0,
$$
with $u(x,0)=f(x)$ for $x\in S$. We denote $V(S)$ the space of
functions $f:S\bigcup \delta S\to {\mathbb C}$ satisfy the Dirichlet
boundary condition and with the $L^2$ norm bound.

\subsection{Heat flow}

We show the following result.
\begin{Thm}\label{main}
Given a function $Q:S\bigcup \delta S\to {\mathbb R}$.
Assume that the function $f:S\bigcup \delta S\to {\mathbb R}$ satisfies the Dirichlet
boundary condition. Then there is a global solution $u(t):S\bigcup
\delta S\to {\mathbb R}$, which can be expressed in (\ref{sch}),
such that $u$ satisfies the heat equation
$$
\partial_tu(x,t)+Lu(x,t)=0, \ \ x\in S, \ t>0,
$$
with $u(x,0)=f(x)$ for $x\in S$ and with the L2 conservation
$$
\|u(t)\|^2+\int_0^t(2|\nabla u|^2+2(Vu,u))= \|f\|^2.
$$
and the energy identity
$$
F(u(t))+2\int_0^t dt\int_S |u_t|^2=F(f).
$$
where
$$
F(u)=\int_{\bar{S}}|\nabla u(t)|^2+(Vu,u).
$$
\end{Thm}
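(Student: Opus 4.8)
The plan is to use that, since $S$ is finite and we work with functions obeying the Dirichlet condition, the entire problem lives on the finite-dimensional real vector space $V(S)=\{v:\bar S\to\mathbb R;\ v|_{\delta S}=0\}$, on which $L=-\Delta+Q$ acts linearly. First I would verify that $L$ is self-adjoint for the weighted inner product $\langle v,w\rangle=\int_{\bar S}vw=\sum_{x\in\bar S}v(x)w(x)d_x$; this is exactly what the symmetric Green formula (Theorem \ref{green2}) gives once the boundary sums are annihilated by the Dirichlet condition. Self-adjointness produces the real eigenvalues $\lambda_j$ and an orthonormal eigenbasis $\{\phi_j\}$, which are precisely the data entering $\mathbf S_t$. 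I would then define $u$ by (\ref{sch}), equivalently $u(t)=\sum_j c_j e^{-\lambda_j t}\phi_j$ with $c_j=\langle f,\phi_j\rangle$. Being a finite sum, $u(t)$ is defined and smooth in $t$ on all of $[0,\infty)$, which gives global existence; differentiating term by term gives $\partial_t u=-Lu$, the identity $\mathbf S_0=I$ gives $u(0)=f$, and since every $\phi_j\in V(S)$ the flow preserves the Dirichlet condition, so $u(t)|_{\delta S}=0$ and in particular $u_t|_{\delta S}=0$ for all $t$.

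For the $L^2$ law I would pair the equation with $u$: $\tfrac12\frac{d}{dt}\|u\|^2=\langle u_t,u\rangle=-\langle Lu,u\rangle$. Writing $\langle Lu,u\rangle=-\int_{\bar S}(\Delta u)u+\int_{\bar S}(Vu,u)$ and using the Dirichlet case of the Green formula (Theorem \ref{green}), in which the rescaling $\Delta\mapsto\tfrac23\Delta$ announced before the heat section turns the boundary term into $\int_{\bar S}(\Delta u)u=-\int_S|\nabla u|^2$, gives $\frac{d}{dt}\|u\|^2=-2\int_S|\nabla u|^2-2\int_{\bar S}(Vu,u)$. Integrating from $0$ to $t$ yields the stated conservation law.

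The energy identity comes from pairing the equation with $u_t$ instead: $\int_{\bar S}|u_t|^2=-\langle Lu,u_t\rangle=\int_{\bar S}(\Delta u)u_t-\int_{\bar S}(Vu,u_t)$. The key is to read the right-hand side as $-\tfrac12\frac{d}{dt}F(u)$. Indeed $\frac{d}{dt}\int_S|\nabla u|^2=2\int_S\nabla u\cdot\nabla u_t$, while the symmetric Green formula (Theorem \ref{green2}), using $u_t|_{\delta S}=0$, rewrites $\int_{\bar S}(\Delta u)u_t=-\int_S\nabla u\cdot\nabla u_t$; combined with $\frac{d}{dt}\int_{\bar S}(Vu,u)=2\int_{\bar S}(Vu,u_t)$ for the time-independent potential, this gives $\int_{\bar S}|u_t|^2=-\tfrac12\frac{d}{dt}F(u)$, and integrating in time produces the energy identity.

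The spectral construction and global existence are routine in finite dimensions. The step I expect to be the real obstacle is the careful accounting of boundary terms and normalizing constants in the Green formulas: one must check that the Dirichlet condition makes the boundary sums of Theorems \ref{green} and \ref{green2} either vanish or recombine into exactly the claimed Dirichlet-energy and potential terms, and that the $\tfrac23$-rescaling of $\Delta$ is precisely what converts the constants of those formulas into the factor $2$ appearing in both identities. Keeping the weighted measure $d_x$ consistent between the pairing $\langle\cdot,\cdot\rangle$ and the integration by parts is where sign and constant errors are most likely to appear.
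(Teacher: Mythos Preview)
Your proposal is correct and follows essentially the same route as the paper: construct $u$ via the eigenfunction expansion (\ref{sch}) and then derive the $L^2$ law and the energy identity by differentiating in $t$ and applying the Green formulas (Theorems \ref{green} and \ref{green2}) with the Dirichlet condition killing the boundary terms. Your write-up is in fact more explicit than the paper's about the self-adjointness of $L$ on $V(S)$, the preservation of the Dirichlet condition along the flow, and the role of the $\tfrac23$-rescaling in fixing the constants.
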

The proof will be given below by the use of the spectrum
of the operator $L$ and the Green formula.

 We now use Theorem \ref{green} (and Theorem
\ref{green2}) to prove Theorem \ref{main}.

\begin{proof}
Assume that $f$ satisfies either  Neumann or Dirichlet boundary
condition. Compute, via the use of the formula (\ref{green+1}),
$$
\frac{d}{dt}\|u(t)\|^2=-2(L u,u)=-2|\nabla u|^2+2(Vu,u).
$$
where we have used implicitly the boundary condition which implies
that $u_t=0$ on $\delta S$.
 We then have the  conservation
$$
\|u(t)\|^2+\int_0^t(2|\nabla u|^2+2(Qu,u))= \|f\|^2.
$$
Recall $$
F(u)=\int_{\bar{S}}|\nabla u(t)|^2+(Qu,u)
$$
Similarly we compute
$$ \frac{d}{dt}F(u)=2 (\nabla u,\nabla u_t)+2(Vu,u_t).
$$
Using the formula (\ref{eq-theorem-3}) for $f=u$ and $g=u_t$, we
know that
$$
2(\nabla u,\nabla u_t)=-2\int_{\bar{S}} (\Delta u,u_t).
$$
Using the heat equation above, we know that the term in the right
side is
$$ -2\int_{S} F(u)=-2\int_S |u_t|^2.
$$
 Then
$$
F(u(t))+2\int_0^t dt\int_S |u_t|^2=F(f).
$$

The uniqueness of the solution follows from the above inequalities.
This completes the proof.
\end{proof}

We
remark that the above approach can not be used to handle the case when $Q(x)$ is replaced by $V(t,x)$; i.e., the time variable
potential heat equation
$$
u_t-\Delta u+V(t,x)u=0.
$$
This is a goal of next section, where we introduce the discrete Morse flow to treat this case.

\subsection{Discrete Morse flow with prescribed time-variable quantity}

The result above motivates us to study the following problem.
For any given $t-$ continuous function $V(t)$. We now consider discrete Morse flow for the heat flow
\begin{equation}\label{ht2}
\partial_t u=\Delta u+V(t) u,
\quad \text{on $[0,T]\times S$}
\end{equation}
with the parabolic initial-boundary value $u_0=\phi$, which is in $H_0^1(S)$,
where $S\subset V$ is a finite sub-graph. Here again, we fix $T>0$ as before and let $h=T/N$ for some positive integer $N>1$.
For $n=1,...,N$, we let $t_n=nh$ and $\lambda_n=V(t_{n-1})$. Define $\lambda_N(t) =\lambda_n$ for $t\in (t_{n-1},t_n]$. Note that $|V(t)-\lambda_N(t)|\to 0$ on $[0,T]$ as $h\to 0$.

One may define the discrete Morse for \eqref{ht2} by the following way.

To introduce the  discrete Morse flow for  flow \eqref{ht2} , we define, for the data $u_{n-1}\subset H:=H_0^1(S)$, the functional
$$
J_n(u)=\frac12 \int_S |\nabla u|^2dx -\frac12 \lambda_n\int_S |u|^2dx,
$$
and
the functional for the $n$-step,
$$
F_n(u)=\frac{1}{2h}\int_S |u-{u}_{n-1}|^2dx+J_n(u).
$$
Note that by Poincare inequality we have
$J_n(u)\geq 0$ for any $u\in H$. Since $\lambda_n\geq \lambda_{n-1}$ we also have
$$
J_n(u)\leq  J_{n-1}(u).
$$

Then we know that there is a minimizer $u_n\in H$ of this functional $F_n(u)$, which solves the following equation
\begin{equation}\label{EL5}
\frac{1}{h}\left({u}_n-{u}_{n-1}\right)
=\Delta {u}_n+\lambda_n u_n \quad
\text{on $S$.}
\end{equation}
in the weak sense.

By the minimality of $u_n$, we have
$$
\frac{1}{2h}\int_S |u-{u}_{n-1}|^2dx+J_n(u_n)\leq J_n(u_{n-1})\leq J_{n-1}(u_{n-1}).
$$
Adding them together, we have
\begin{equation}\label{bounds5}
\frac{1}{2}\int_0^T\int_S |\frac{u_n-{u}_{n-1}}{h}|^2dx+J_N(u_N) \leq J_0(u_0).
\end{equation}

We now define  ${u}_N(t)\in H^1_0$ and $\hat{u}_N(t)\in H$ for $t\in [0,T]$ in such a way
that, for $n=1,...,N$,$t_n=nh$,
\[
u_N(t)=\frac{t_n-t}{h}u_{n-1}+\frac{t-t_{n-1}}{h} u_n; \ \  t\in [t_{n-1},t_n].
\]
Define
$$
\hat{u}_N(t)={u}_n \quad t\in (t_{n-1},t_n]
$$
and
$$
\hat{u}_N(t)=\phi,  \quad t\in [-h,0].
$$

We further define, for $n=1,...,N$,
\[
\partial_t {\hat{u}_N(t)}=\frac{1}{h}\left(u_n-u_{n-1}\right),
\quad t\in [t_{n-1},t_n].
\]

Note that
$$
\partial_t {\hat{u}_N(t)}=\partial_t {u}_N(t).
$$
Then,
$$
u_N(t)=\phi+\int_0^t\partial_t {\hat{u}_N(t)}
$$
and
$$
|u_N(t)|_{L^2}=|\phi|_{L^2}+\int_0^t|\partial_t {\hat{u}_N(t)}|_{L^2}.
$$
Hence, we have
$$
|u_N(t)|^2_{L^2}\leq2|\phi|_{L^2}^2+|\int_0^t|\partial_t {\hat{u}_N(t)}|_{L^2}|^2.
$$

By (\ref{bounds5}) we have for some uniform constant $C(T)>0$,
\[
\int_0^T |\partial_t {u}_N(t)|^2_{L^2}\leq C(T)
\]
and
$$
\int_0^T |\partial_t {u}_N(t)|_{L^2}\leq C(T).
$$
The latter implies that
 for any $t>0$,
 $$
 |u_N(t)-u_0|_{L^2}\leq C(T),
 $$
i.e., for each $t>0$,
$$
|u_N(t)|_{L^2}\leq C(T)+|u_0|_{L^2}:=C_1.
$$
Since $J_N(u_N(t))\leq J_0(u_0)$, we know that
$$
\frac12 \int_S |\nabla u_N(t)|^2dx\leq J_0(u_0)+|V|_\infty C_1^2.
$$
Then as before, we may get the limit $u_\infty(t)$ which solves \eqref{ht2} weakly for the $t-$ continuous function $V(t)$.

We now have the following conclusion.

\begin{Thm}\label{mali4} Fix any continuous bounded function $V(t)$ for $t\in [0,\infty)$.
For any $T>0$ and any $\phi\in H^1$ with $u_0=\phi $, there exists a discrete Morse flow $\{\hat{u}_N(t)\}$ as above and its limit is a weak
solution to \eqref{ht2} on $[0,T]\times S$ in the sense of the weak
form \eqref{ht2}.
\end{Thm}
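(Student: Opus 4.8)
The plan is to carry out the minimizing-movement (Rothe) scheme already set up above: solve the one-step variational problems, extract uniform estimates, pass to a subsequential limit, and identify the limit as a weak solution of \eqref{ht2}. Since $S$ is a finite subgraph, the state space $H=H_0^1(S)$ is finite-dimensional, and on it the $L^2(S)$ and $H_0^1(S)$ norms are equivalent; this collapses most of the functional-analytic compactness into elementary facts and is what makes the scheme run on any time interval.

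First I would establish one-step solvability. For each $n$ the functional $F_n$ is a quadratic on the finite-dimensional space $H$, and for $h$ small (equivalently $N$ large), the negative term $-\frac12\lambda_n\int_S|u|^2$ is dominated by $\frac{1}{2h}\int_S|u-u_{n-1}|^2$ together with $\frac12\int_S|\nabla u|^2$; since $V$ is bounded, $|\lambda_n|\le\|V\|_\infty$, so whenever $h<1/\|V\|_\infty$ the functional $F_n$ is coercive and continuous and hence attains a minimizer $u_n\in H$. Computing the Euler--Lagrange equation via the Green formula of Theorem \ref{green2} shows that $u_n$ solves \eqref{EL5} in the weak sense, and the minimality comparison furnishes the step inequality behind \eqref{bounds5}.

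Next come the uniform a priori estimates, which is where I expect the main difficulty. The monotonicity $\lambda_n\ge\lambda_{n-1}$ and the positivity $J_n\ge0$ used in the construction need not hold for a general bounded $V$, so I would not rely on them. Instead I would first test \eqref{EL5} with $u_n$ and use $\langle u_n-u_{n-1},u_n\rangle\ge\frac12(\|u_n\|^2-\|u_{n-1}\|^2)$ to obtain $\|u_n\|_{L^2}^2\le(1+Ch)\|u_{n-1}\|_{L^2}^2$; a discrete Gronwall argument then bounds $\sup_n\|u_n\|_{L^2}$ by $e^{CT}\|\phi\|_{L^2}$. Feeding this back into the step inequality, where the loss of monotonicity contributes a correction of size $\frac12|\lambda_n-\lambda_{n-1}|\,\|u_{n-1}\|_{L^2}^2$ controlled by the modulus of continuity of $V$ times the $L^2$ bound, yields, uniformly in $N$, both $\int_0^T\|\partial_t\hat u_N\|_{L^2}^2\,dt\le C(T)$ and $\sup_{t\in[0,T]}\|\nabla u_N(t)\|_{L^2}^2\le C(T)$, which is \eqref{bounds5} in robust form.

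Finally I would pass to the limit. The bound on $\partial_t\hat u_N$ gives the H\"older-$\tfrac12$ estimate $\|u_N(t)-u_N(s)\|_{L^2}\le C|t-s|^{1/2}$, so $t\mapsto u_N(t)$ is equicontinuous and pointwise bounded in the fixed finite-dimensional space $H$; Arzel\`a--Ascoli extracts a subsequence with $u_N\to u_\infty$ uniformly on $[0,T]$ (equivalently in $C([0,T];H_0^1(S))$), with $\partial_t\hat u_N\rightharpoonup\partial_t u_\infty$ weakly in $L^2(0,T;L^2)$, and with $\|\hat u_N-u_N\|_{L^2}\to0$ so that $\hat u_N$ has the same limit. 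Writing the weak form of \eqref{EL5} against any $\varphi\in H_0^1(S)$ paired with a time test function, and using $\lambda_N(t)\to V(t)$ uniformly together with these convergences (the gradient term passing to the limit directly since $\nabla$ is a fixed linear map on the finite-dimensional space), shows that $u_\infty$ satisfies the weak form of \eqref{ht2}. The identity $u_N(0)=\phi$ and uniform convergence give $u_\infty(0)=\phi$, while $u_n\in H_0^1(S)$ for every $n$ preserves the Dirichlet condition in the limit, completing the proof.
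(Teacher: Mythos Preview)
Your proposal is correct and in fact more careful than the paper's own argument. The paper obtains the key estimate \eqref{bounds5} by invoking two extra facts not contained in the hypothesis of the theorem: that $J_n(u)\ge 0$ for all $u\in H$ (the line ``by Poincare inequality'', which requires each $\lambda_n$ to lie below the first Dirichlet eigenvalue of $S$), and that $\lambda_n\ge\lambda_{n-1}$, i.e.\ that $V$ is nondecreasing, which is what produces the clean telescoping $J_n(u_{n-1})\le J_{n-1}(u_{n-1})$. With those two assumptions the energy bound is a one-line sum, and the passage to the limit is then left implicit (``as before, we may get the limit $u_\infty$''). Your route---first securing a uniform $L^2$ bound by testing \eqref{EL5} with $u_n$ and running a discrete Gronwall, then feeding that bound back into the step inequality---avoids both unstated hypotheses and covers any bounded continuous $V$, at the price of the Gronwall constant $e^{CT}$. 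Your compactness step via Arzel\`a--Ascoli on the finite-dimensional space $H$ and the identification of the limit are also more explicit than what the paper writes.

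One technical caution: the correction you record as $\tfrac12|\lambda_n-\lambda_{n-1}|\,\|u_{n-1}\|_{L^2}^2$ sums to a quantity of order $N\,\omega_V(h)=T\omega_V(h)/h$, which is not uniformly bounded for a merely continuous $V$. The easy fix is to keep $\lambda_n$ (not $\lambda_n-\lambda_{n-1}$) in the comparison $F_n(u_n)\le F_n(u_{n-1})$, so the right-hand side carries $\tfrac12\lambda_n(\|u_n\|^2-\|u_{n-1}\|^2)$; bound this by $\|V\|_\infty\cdot 2C_1\|u_n-u_{n-1}\|$ via your $L^2$ bound and Young's inequality, absorb half of $\tfrac{1}{2h}\|u_n-u_{n-1}\|^2$, and sum. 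Alternatively, as you already observe, finite-dimensional norm equivalence makes the gradient bound automatic once the $L^2$ bound is in hand. Either way the scheme closes.
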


Using a similar argument we may have
\begin{Thm}\label{mali4} Fix any $t-$ continuous function $V(t,x)$ for $(t,x)\in [0,\infty)\times \bar{S}$.
For any $T>0$ and any $\phi\in H^1_0$ with $u_0=\phi $, there exists a discrete Morse flow $\{\hat{u}_N(t)\}$ as above and its limit is a weak
solution to \eqref{ht3}
\begin{equation}\label{ht3}
\partial_t u=\Delta u+V(t,x) u,
\quad \text{on $[0,T]\times S$}
\end{equation}
with the parabolic initial-boundary value $u_0=\phi$, which is in $H_0^1(S)$,
in the sense of the weak
form \eqref{ht3}.
\end{Thm}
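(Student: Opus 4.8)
The plan is to run the discrete Morse scheme exactly as in the spatially constant case treated just above, but with the constants $\lambda_n$ replaced by the functions $\lambda_n(x):=V(t_{n-1},x)$ on $\bar S$. Since $\bar S$ is finite and $[0,T]$ is compact, the $t$-continuity of $V$ supplies both a uniform bound $M:=\sup_{[0,T]\times\bar S}|V|$ and a modulus of continuity $\omega$ with $|V(s,x)-V(t,x)|\le\omega(|s-t|)$ and $\omega(0^+)=0$. I would first set $J_n(u)=\frac12\int_S|\nabla u|^2-\frac12\int_S\lambda_n(x)|u|^2$ and $F_n(u)=\frac{1}{2h}\int_S|u-u_{n-1}|^2+J_n(u)$. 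Because $H=H_0^1(S)$ is finite dimensional and $F_n$ is continuous and, once $h<1/(2M)$, coercive (the term $\frac{1}{2h}\|u\|^2$ dominates the bounded potential term $\tfrac M2\|u\|^2$), a minimizer $u_n\in H$ exists, and its Euler--Lagrange equation is again the discrete scheme \eqref{EL5}, now with $\lambda_n$ read as the function $V(t_{n-1},\cdot)$.

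The crux, and the step I expect to be the main obstacle, is the a priori estimate. The spatially constant argument obtained \eqref{bounds5} from the clean monotonicity $J_n(u)\le J_{n-1}(u)$, which holds pointwise only because $\lambda_n\ge\lambda_{n-1}$ were constants; for $V(t,x)$ this monotonicity fails, and telescoping $J_n(u_{n-1})-J_{n-1}(u_{n-1})$ produces an error of order $\frac{T\omega(h)}{2h}$ that need not vanish unless $V$ is Lipschitz in $t$. To avoid demanding temporal regularity beyond continuity, I would instead extract the energy bound directly from the scheme. Testing \eqref{EL5} against $u_n$ (using the Green formula \eqref{eq-theorem-3} with the normalization of $\Delta$ fixed above) and the elementary inequality $(u_n-u_{n-1},u_n)\ge\frac12(\|u_n\|^2-\|u_{n-1}\|^2)$ together with $\int_S V(t_{n-1},x)|u_n|^2\le M\|u_n\|^2$ gives $(1-2hM)\|u_n\|^2\le\|u_{n-1}\|^2$, so a discrete Gronwall yields the uniform $L^2$ bound $\|u_n\|^2\le e^{4MT}\|\phi\|^2=:C_1^2$. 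Next I would test \eqref{EL5} against the increment $u_n-u_{n-1}$; writing $(\Delta u_n,u_n-u_{n-1})=-\frac12(\|\nabla u_n\|^2-\|\nabla u_{n-1}\|^2+\|\nabla(u_n-u_{n-1})\|^2)$ and absorbing the potential term by Young's inequality produces, after summing over $n$,
$$
\frac{1}{2h}\sum_{n=1}^N\|u_n-u_{n-1}\|^2+\tfrac12\|\nabla u_N\|^2\le\tfrac12\|\nabla\phi\|^2+\tfrac{M^2T}{2}C_1^2=:C(T).
$$
The point is that testing against the time increment, rather than relying on the monotone telescoping of $J_n$, needs only the uniform $L^2$ bound and the sup bound $M$, and hence survives the loss of monotonicity when $V$ depends on $x$; the same partial sums give $\tfrac12\|\nabla u_n\|^2\le C(T)$ for every $n$.

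With these bounds the compactness step is routine and parallels the spatially constant case. The interpolants $u_N:[0,T]\to H$ are uniformly bounded in the spatial $H^1$ norm and satisfy $\int_0^T\|\partial_t u_N\|^2\le C(T)$; since $H$ is finite dimensional, Arzel\`a--Ascoli (equicontinuity from $|u_N(t)-u_N(s)|\le C(T)^{1/2}|t-s|^{1/2}$) extracts a subsequence converging uniformly on $[0,T]$ to some $u_\infty$, with $\partial_t u_N\rightharpoonup\partial_t u_\infty$ weakly in $L^2$. One checks $\|\hat u_N(t)-u_N(t)\|\to0$ in $L^2(0,T)$ (indeed $\int_0^T\|\hat u_N-u_N\|^2\le h\sum_n\|u_n-u_{n-1}\|^2\le 2h^2C(T)$) and $\lambda_N(t,x)=V(t_{n-1},x)\to V(t,x)$ uniformly on $[0,T]\times\bar S$ by $t$-continuity. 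Finally I would pass to the limit in the weak form of \eqref{EL5}, namely $\int_0^T(\partial_t u_N,\psi)=-\int_0^T(\nabla\hat u_N,\nabla\psi)+\int_0^T\int_S\lambda_N\hat u_N\psi$ tested against $\psi$ with $\psi|_{\delta S}=0$: weak convergence of $\partial_t u_N$ handles the first term, while the uniform convergences of $\hat u_N$ and $\lambda_N$ handle the remaining two, yielding the weak form of \eqref{ht3}. The initial condition $u_\infty(0)=\phi$ follows from the uniform convergence and $u_N(0)=\phi$, and the Dirichlet condition persists because every $u_n\in H_0^1(S)$.
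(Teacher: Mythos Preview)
Your argument is correct and, in fact, more robust than the paper's. The paper offers no separate proof for the $V(t,x)$ case; it simply says ``using a similar argument'' and relies on the scheme for $V(t)$, whose key step is the telescoping inequality $F_n(u_n)\le F_n(u_{n-1})=J_n(u_{n-1})\le J_{n-1}(u_{n-1})$, the second inequality coming from the asserted monotonicity $\lambda_n\ge\lambda_{n-1}$ (and positivity $J_n\ge0$ from Poincar\'e). Neither of these follows from mere $t$-continuity of $V$, so as you observed the telescoping route tacitly needs either $V$ non-decreasing in $t$ or at least Lipschitz in $t$ to control $J_n(u_{n-1})-J_{n-1}(u_{n-1})$. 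Your alternative---test the Euler--Lagrange equation first against $u_n$ to get a discrete-Gronwall $L^2$ bound, then against the increment $u_n-u_{n-1}$ using the polarization identity $(a,a-b)=\tfrac12(|a|^2-|b|^2+|a-b|^2)$ for the gradient term---needs only the uniform bound $M=\sup|V|$ and therefore covers the general bounded continuous $V(t,x)$ stated in the theorem. What the paper's route buys, when its monotonicity hypothesis holds, is a one-line energy estimate directly from minimality without ever writing the Euler--Lagrange equation; what your route buys is that the a~priori bounds survive the loss of that monotonicity, which is exactly what the $x$-dependent case requires. The remaining compactness and limit passage you sketch are the same as the paper's and are routine in this finite-dimensional setting.
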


\section{Part 3. Discussions about harmonic maps}
  In this section, we discuss the definitions of harmonic maps from a graph to a Riemannian manifold and harmonic morphism from a graph to another graph.
  We also formally formulate the harmonic map equation for a mapping from a graph to the Riemannian manifold by computing the Euler-Lagrange equation of the energy functional. We start from the definition of a energy density of a mapping from a graph to the Riemannian manifold.

  Assume $(M,g)$ is a complete (or compact) Riemannian manifold and we consider the harmonic maps from the graph $V$ to $M$. Denote by $d_M$ the distance function on $M$.
  For any map from $V$ to $M$, we define the energy density by
  $$
  e(f)(x)=\frac12 \frac{1}{d_x}\sum_{y\in N(x)} d_M^2(f(x),f(y))
  $$
and its energy on the subset $S$ of $V$  by
$$
E_S(f)=\int_{\bar{S}} e(f)(x)dx.
$$
The critical point of the functional $E_S(\cdot)$ is called the harmonic map from $S$ to $M$. To formally compute the first variation of $E_S$, we may isometrically embed $M$ into an Euclidean space $R^K$ and let $O(M)$ be an open neighborhood of $M$ in $R^k$. Denote by $P:O(M)\subset R^K\to M$ the projection. Suppose that $u$ is a harmonic map from $V$ to $M$. For any
$\eta:V\to M$, $\eta|_{\delta S}=0$, and for small $t$, we consider the variation $P(u+t\eta)$ of $u$. Then at $t=0$, using the Divergence theorem,
\begin{align*}
&\frac{d}{dt} E_S(P(u+t\eta))=\frac12\frac{d}{dt} \sum_x  \sum_{y\in N(x)}d_M^2(P(u(x)+t\eta(x)),P(u(y)+t\eta(y)) \\
 &=\sum_x \sum_{y\in N(x)}(d_M(u(x),u(y))(\partial_1d_M (u(x),u(y))\eta(x)+\partial_2d_M (u(x),u(y))\eta(y))\\
 &= \sum_x\sum_{y\in N(x)}(d_M(u(x),u(y))(\partial_1d_M (u(x),u(y))+\partial_2d_M (u(x),u(y)))\eta(x)+\\
&
+\sum_x\sum_{y\in N(x)}d_M(u(x),u(y))\partial_2d_M (u(x),u(y))(\eta(y)-\eta(x))\\
&
= \int_SdxM(u)(x)\eta(x)+\int_S dx W(x)\cdot \nabla \eta(x)\\
&
= \int_Sdx M(u)(x)\eta(x)-2\int_Sdx div W(x)\eta(x),
\end{align*}
where
$$
M(u)(x)=\frac{1}{d_x}\sum_{y\in N(x)}(d_M(u(x),u(y))(\partial_1d_M (u(x),u(y))+\partial_2d_M (u(x),u(y)))
$$
and
$$
W(x)=(d_M(u(x),u(y))\partial_2d_M (u(x),u(y))_{y\in N(x)}.
$$
Then we have the harmonic map equation
$$
-div W(x)+\frac12M(u)(x)=0, \ in \ S.
$$
The corresponding heat flow equation is
$$
u_t(x)=div W(x)-\frac12M(u)(x), \ \ x\in S
$$
with suitable initial and boundary conditions. Even the target is a non-compact complete Riemannian manifold, we still expect that this flow has a global solution with the initial and boundary conditions.

Assume that $(M,g)$ is a compact Riemannian manifold.
With a given initial data with boundary condition on a finite subgraph $S$, one may use the heat flow method to study the corresponding heat flow equation for the energy functional $E_S$.
In particular, for any boundary data $\varphi:\delta S\to M$,
We define the mapping class
$$
\mathbb{A}_\varphi=\{u: S\bigcup \delta S\to M; u|_{\delta S}=\varphi\}.
$$
This mapping class is always non-empty when $S$ is finite.
We say $u$ is a harmonic map from $S$ to $M$ with Dirichlet boundary condition $\varphi$ if
$u$ is a critical point of $E_S$ in the mapping class $\mathbb{A}_\varphi$.
 Then in this case, by using the direct method, there exists at least a harmonic map, which is a minimizer of the energy functional in this mapping class. One expects when the Riemannian manifold $(N,g)$ has negative curvature, the minimizer is the unique harmonic map from $S$ to $M$ in the mapping class.

  Given two graphs $V_1$ and $V_2$ and we may define the harmonic morphism $F$ from $V_1$ to $V_2$ such that for any harmonic function $h$ on $V_2$, the composition $h\circ F: V_1\to R$ is a harmonic function. Generally speaking, it is hard to find the existence of harmonic morphism defined in this way. Of course, we may localize the definitions above and we leave them for interested readers.

\end{document}